\documentclass[12pt]{amsart} 
\usepackage{latexsym,mathtools}
\usepackage{epsfig,setspace}
\usepackage[utf8]{inputenc}
\usepackage{tipa}
\usepackage{a4}
\usepackage{amssymb}
\usepackage{amsthm}
\usepackage{amsbsy}
\usepackage{upgreek}
\usepackage{amsfonts,amssymb,latexsym,amscd,amsmath,euscript,enumerate,verbatim,calc} 
\allowdisplaybreaks[1]
\usepackage{url}
\usepackage{hhline}
\usepackage{pifont}
\usepackage[colorlinks=true,linkcolor=blue,citecolor=magenta]{hyperref}
\theoremstyle{plain}

\newtheorem{theorem}{Theorem}[section]

\newtheorem{proposition}[theorem]{Proposition}

\newtheorem{problem}[theorem]{Problem}
\newtheorem{conjecture}[theorem]{Conjecture}
\newtheorem{corollary}[theorem]{Corollary}
\theoremstyle{definition}
\newtheorem{definition}[theorem]{Definition}

\newtheorem{remark}[theorem]{Remark}

\def\Fq{{\mathbb F}_q}

\def\Bk{\mathcal{B}_k}
\def\Bn{\mathcal{B}_n}

\newcommand{\rank}{\operatorname{rank}}
\newcommand{\GL}{\operatorname{GL}}

\makeatletter
\def\imod#1{\allowbreak\mkern10mu({\operator@font mod}\,\,#1)}
\makeatother

\title{Counting zero kernel pairs over a finite field}
\author{Samrith Ram} 
\address{Department of Mathematics, Indian Institute of Science, Bangalore  560012, India.}
\email{samrith@gmail.com}
\date{\today}

\keywords{zero kernel pair, matrix completion, reachable pair, observable pair, unimodular matrix polynomial, finite field}

\subjclass[2010]{93B05, 93B07, 15B33, 15A22, 15A83}

\begin{document} 
\begin{abstract}
Helmke et al. have recently given a formula for the number of reachable pairs of matrices over a finite field. We give a new and elementary proof of the same formula by solving the equivalent problem of determining the number of so called zero kernel pairs over a finite field. We show that the problem is equivalent to certain other enumeration problems and outline a connection with some recent results of Guo and Yang on the natural density of rectangular unimodular matrices over $\Fq[x]$. We also propose a new conjecture on the density of unimodular matrix polynomials.
\end{abstract}
\maketitle
\section{Introduction}
Let $\Fq$ denote the finite field with $q$ elements. 
For positive integers $n,k$, we denote by $M_{n,k}(\Fq)$ the set of all $n \times k$ matrices with entries in $\Fq$ and by $M_n(\Fq)$ the set of all square $n\times n$ matrices with entries in $\Fq$. Throughout this paper we assume $k<n$ unless otherwise stated. Consider the following problems:
\begin{problem}
\label{howmanycompletions}
How many matrices in $M_{n,k}(\Fq)$ occur as the submatrix formed by the first $k$ columns of some matrix in $M_n(\Fq)$ with irreducible characteristic polynomial?  
\end{problem}
\begin{problem}
\label{howmanyunimodular}
 How many matrices $Y\in M_{n,k}(\Fq)$ have the property that the linear matrix polynomial
  \begin{equation}
    \label{eq:AC}
    x{I_k \brack \bf{0}}-Y
  \end{equation}
is unimodular (i.e. has all invariant factors equal to 1)? ($I_k$ denotes the $k\times k$ identity matrix, $\bf{0}$ denotes the $n-k \times k$ zero matrix).
\end{problem}
\begin{problem}
\label{numberofcpairs}
  How many pairs of matrices $(A,B)\in M_k(\Fq)\times M_{k,n-k}(\Fq)$ have the property that 
  \begin{equation}
\label{controllabilitymatrix}
\rank  \begin{bmatrix}
B &AB &\cdots & A^{k-1}B
  \end{bmatrix}=k?
  \end{equation} 
\end{problem}
\begin{problem}
\label{howmanyT}
 If $V$ is an $n$-dimensional vector space over $\Fq$ and $W$ is a fixed $k$-dimensional subspace of $V$, how many linear transformations $T:W\to V$ have the property that the only $T$-invariant subspace (contained in $W$) is the zero subspace?  
\end{problem}
Interestingly, all the above problems are equivalent and have the same answer given by $\prod_{i=1}^{k}(q^n-q^i)$. Problem \ref{numberofcpairs} was considered by Koci\textpolhook{e}cki and Przyłuski \cite{KocPrz1989} in the context of estimating the proportion of reachable linear systems over a finite field. They gave an explicit answer to Problem \ref{numberofcpairs} in the cases $n-k=1,2$. In the same paper, they stated that the general problem ``seems to be rather difficult''. In fact, the problem of finding an explicit formula for the number of `reachable pairs' $(A,B)$ (i.e. pairs of matrices satisfying \eqref{controllabilitymatrix}) has been settled only very recently by Helmke et al. \cite[Thm. 1]{Helmkeetal2015}. The proof relies on some earlier results by Helmke \cite{Helmke1986,Helmke1993} and uses some advanced geometric techniques.

 In this paper, we give a new proof of the same formula for the number of reachable pairs by first showing in Section \ref{background} that Problems \ref{howmanycompletions}, \ref{howmanyunimodular}, \ref{numberofcpairs} above are indeed equivalent. In Section \ref{enumeration} we explain the connection with Problem \ref{howmanyT} which we subsequently solve. Our proof is self-contained and uses only elementary methods in linear algebra and the $q$-Vandermonde identity for Gaussian binomial coefficients. We also highlight a connection between Problem  \ref{howmanyunimodular} and some recent results by Guo and Yang \cite{GuoYang2013} on the natural density of rectangular unimodular matrices over $\Fq[x]$. We then propose a new conjecture on the density of unimodular matrices that generalizes the problems stated earlier in the introduction.
  
\section{Background}
\label{background}
 Problem~\ref{howmanycompletions} is in fact a combinatorial matrix completion problem -- we would like to count the number of matrices in $M_{n,k}(\Fq)$ for which there exists a completion (by padding $n-k$ columns on the right) to a square matrix in $M_n(\Fq)$ with irreducible characteristic polynomial. Our starting point is the following result of Wimmer \cite{Wimmer1974} (also see Cravo \cite[Thm. 15]{Cravo2009}).
\begin{theorem}[Wimmer]
\label{wimmer}
Let $F$ be an arbitrary field and let $A\in M_k(F), C\in M_{n-k , k}(F)$. Suppose $f(x)\in F[x]$ is a monic polynomial of degree $n$ and let $f_1(x)\mid \cdots \mid f_k(x)$ be the invariant factors of the polynomial matrix
\begin{equation}
  \label{xI-kcols} 
\begin{bmatrix}
xI_k-A \\
-C
\end{bmatrix}.
\end{equation}

There exist $B\in M_{k , n-k}(F), D\in M_{n-k}(F)$ such that the block matrix
$$
\begin{bmatrix}
A & B \\
C & D
\end{bmatrix}
$$
has characteristic polynomial $f(x)$ if and only if $f_1(x)\cdots f_k(x) \mid f(x)$.
\end{theorem}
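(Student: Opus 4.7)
Both directions are best approached via the Smith normal form of the pencil \eqref{xI-kcols}. Choose $U \in \GL_n(F[x])$ and $V \in \GL_k(F[x])$ with
\[
U\begin{bmatrix} xI_k - A \\ -C \end{bmatrix}V \;=\; \begin{bmatrix} \diag(f_1,\ldots,f_k) \\ 0 \end{bmatrix}.
\]
For any completion $M = \begin{bmatrix} A & B \\ C & D \end{bmatrix}$, the matrix $U(xI_n - M)\begin{bmatrix} V & 0 \\ 0 & I_{n-k}\end{bmatrix}$ is upper block-triangular, with top-left block $\diag(f_1,\ldots,f_k)$ and some $(n-k)\times(n-k)$ polynomial bottom-right block $E(x)$ depending on $B,D$ and $U$. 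Comparing determinants,
\[
f_1 f_2 \cdots f_k \cdot \det E(x) \;=\; c\, f(x), \qquad c \in F^\times,
\]
which yields the necessity direction immediately.

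For sufficiency, given the divisibility $f_1 \cdots f_k \mid f$, the task is to exhibit $(B, D)$ for which the associated $E(x)$ has determinant $c^{-1}f(x)/(f_1\cdots f_k)$. The subtlety is that $E(x)$ cannot be prescribed freely: it is the bottom $(n-k)\times(n-k)$ block of $U\begin{bmatrix} -B \\ xI_{n-k}-D\end{bmatrix}$, and the right-hand factor has a very rigid polynomial shape (top $k$ rows constant in $x$, bottom $n-k$ rows equal to $xI_{n-k}$ minus a constant). My plan would be to first reduce $(A, C)$ to a canonical form under the block-diagonal similarity $(A, C) \mapsto (P_1 A P_1^{-1},\, P_2 C P_1^{-1})$, which preserves both the Smith form of \eqref{xI-kcols} and the characteristic polynomial of every completion. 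In this canonical form (the Kronecker canonical form for the pencil \eqref{xI-kcols}), the pair decomposes as a direct sum of companion-type blocks realizing $f_1,\ldots,f_k$ together with ``singular'' rectangular blocks encoding the column minimal indices of the pencil; within the singular blocks one has enough freedom to choose $B$ and $D$ so that $E(x)$ picks up a determinantal factor equal to $q(x) := f(x)/(f_1\cdots f_k)$.

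The main obstacle is this canonical-form reduction together with the explicit blockwise construction of $(B, D)$ and the verification that it succeeds. Once the pencil is in Kronecker form, the completion splits as a direct sum of companion-type blocks for the $f_i$ and for $q$, and a straightforward block-determinant expansion confirms $\det(xI_n - M) = f_1\cdots f_k \cdot q = f(x)$. The necessity half is essentially a one-paragraph consequence of the Smith-form identity; it is the sufficiency construction that carries the content of the theorem.
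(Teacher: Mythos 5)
The paper does not prove this statement: it is quoted verbatim from Wimmer's 1974 paper (with a pointer to Cravo's survey), and the present paper only uses it as a black box to pass from Problem 1 to Problem 2. So there is no in-paper argument to compare yours against; I can only assess your proposal on its own terms. Your necessity direction is complete and correct: writing $U\bigl[\begin{smallmatrix}xI_k-A\\-C\end{smallmatrix}\bigr]V=\bigl[\begin{smallmatrix}\diag(f_1,\dots,f_k)\\0\end{smallmatrix}\bigr]$ and observing that $U(xI_n-M)\diag(V,I_{n-k})$ is block upper triangular with top-left block $\diag(f_1,\dots,f_k)$ gives $f_1\cdots f_k\det E(x)=cf(x)$ with $c\in F^\times$, hence the divisibility. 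This is the standard one-paragraph argument and needs nothing more.

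The sufficiency half, however, still has a genuine gap where you write that ``within the singular blocks one has enough freedom to choose $B$ and $D$ so that $E(x)$ picks up a determinantal factor equal to $q(x)$.'' That sentence is the theorem. After the Kalman-type reduction (which is legitimate: $(A,C)\mapsto(P_1AP_1^{-1},P_2CP_1^{-1})$ preserves both the Smith form of the pencil and the characteristic polynomials of completions), what remains is exactly the special case of the theorem for an \emph{observable} pair: given $(C_2,A_2)$ observable with $A_2\in M_{k-d}(F)$, $C_2\in M_{n-k,k-d}(F)$, one must produce a completion whose characteristic polynomial is an \emph{arbitrary} prescribed monic $q$ of degree $n-d$. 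This is not a formal block-determinant expansion; it requires an actual construction, e.g.\ chaining the Brunovsky blocks of the observability indices into a single companion cycle, or (in the single-output case) the adjugate computation $\det(xI-M)=\chi_{A_2}(x)(x-d)-c^{T}\operatorname{adj}(xI-A_2)b$ together with the fact that observability/reachability makes the coefficient vectors of $\operatorname{adj}(xI-A_2)b$ span $F^{k-d}$, so that the lower-order terms can be assigned freely. Until one of these is written out, the proof is incomplete. Two smaller points: the tall pencil $\bigl[\begin{smallmatrix}xI_k-A\\-C\end{smallmatrix}\bigr]$ has full column rank over $F(x)$, so its singular Kronecker structure consists of \emph{row} (left) minimal indices, not column minimal indices; and one should note that the canonical form is only block \emph{triangular} across the unobservable/observable split, which is harmless for the determinant provided $B$ and $D$ are chosen supported on the observable part.
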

By applying Wimmer's theorem to the case where $f(x)$ is irreducible, we obtain the following result.
\begin{corollary}
\label{wimcor}
Let $A\in M_k(\Fq)$ and $C\in M_{n-k , k}(\Fq)$. The block matrix ${A \brack C} \in M_{n,k}(\Fq)$ can be completed to a matrix in $M_n(\Fq)$ with irreducible characteristic polynomial if and only if all the invariant factors of \eqref{xI-kcols} are equal to 1. 
\end{corollary}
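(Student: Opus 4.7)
The plan is to deduce Corollary \ref{wimcor} as a direct application of Wimmer's theorem, using the special feature that we insist the characteristic polynomial be irreducible.

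\medskip

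\noindent\textbf{Forward direction.} Suppose $\begin{bmatrix} A \\ C \end{bmatrix}$ admits a completion with irreducible characteristic polynomial $f(x)\in \Fq[x]$ of degree $n$. Let $f_1 \mid \cdots \mid f_k$ be the invariant factors of the polynomial matrix in \eqref{xI-kcols}. By Wimmer's theorem, $f_1(x)\cdots f_k(x) \mid f(x)$. The crucial observation is a degree bound: the top $k\times k$ block $xI_k - A$ of \eqref{xI-kcols} is itself a $k\times k$ minor whose determinant is the characteristic polynomial of $A$, a monic polynomial of degree $k$. Hence the $k$-th determinantal divisor of \eqref{xI-kcols}, which equals $f_1 \cdots f_k$, is monic of degree at most $k$. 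Since $k<n$, this product has degree strictly less than $\deg f$. An irreducible monic polynomial has only $1$ and itself as monic divisors, so $f_1\cdots f_k = 1$, forcing each invariant factor to equal $1$.

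\medskip

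\noindent\textbf{Converse.} Assume all invariant factors of \eqref{xI-kcols} equal $1$. Choose any monic irreducible polynomial $f(x) \in \Fq[x]$ of degree $n$; such polynomials exist over every finite field for every $n\geq 1$. Then trivially $f_1(x)\cdots f_k(x) = 1 \mid f(x)$, and Wimmer's theorem supplies matrices $B\in M_{k,n-k}(\Fq)$ and $D\in M_{n-k}(\Fq)$ such that the block completion
$$
\begin{bmatrix} A & B \\ C & D \end{bmatrix}
$$
has characteristic polynomial $f(x)$, which is irreducible by choice.

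\medskip

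\noindent\textbf{Remarks on difficulty.} There is no real obstacle here beyond pointing out the two facts underlying the argument: the degree bound on $f_1 \cdots f_k$ coming from the obvious $k\times k$ minor, and the existence of monic irreducible polynomials of every degree over $\Fq$. Both are standard. The corollary is thus essentially a packaging of Wimmer's theorem tailored to the irreducible case.
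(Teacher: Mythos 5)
Your proof is correct and follows the same route the paper intends: the paper derives Corollary \ref{wimcor} simply ``by applying Wimmer's theorem to the case where $f(x)$ is irreducible,'' and you have filled in the two details it leaves implicit, namely that $f_1\cdots f_k$ is the $k$-th determinantal divisor dividing $\det(xI_k-A)$ and hence has degree at most $k<n$, and that monic irreducible polynomials of degree $n$ exist over $\Fq$. No changes needed.
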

 Corollary \ref{wimcor} establishes the equivalence between Problems \ref{howmanycompletions} and \ref{howmanyunimodular}.

\begin{definition}
We say that the ordered pair $(C,A)\in M_{n-k,k}(\Fq) \times M_k(\Fq)$ is a \emph{zero kernel pair} \cite[sec. X.1]{Gohbergetal1995} if 
$$
\bigcap_{i=0}^{k-1}\ker(CA^i)=\{\bf{0}\}.
$$
\end{definition}

\begin{remark}
The pair $(C,A)$ is called an \emph{observable pair} in the terminology of linear control theory.  
\end{remark}

The following proposition (\cite[Thm. IX.3.3]{Gohbergetal1995}, \cite[Thm. 23]{Sontag1998}) gives alternate characterizations of zero kernel pairs.
\begin{proposition}
\label{zerokernelpair}
Let $A\in M_k(\Fq)$, $C \in M_{n-k,k}(\Fq)$. The following are equivalent:
\begin{enumerate}
\item The product of the invariant factors of \eqref{xI-kcols} is 1.
\item The block matrix
  \begin{equation*}
    \begin{bmatrix}
      C \\
      CA \\
      \vdots \\
      CA^{k-1}
    \end{bmatrix}
  \end{equation*}
has rank $k$.
\end{enumerate}
\end{proposition}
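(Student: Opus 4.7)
My plan is to prove that both (1) and (2) are equivalent to the zero kernel pair condition $\bigcap_{i=0}^{k-1}\ker(CA^i)=\{\bf{0}\}$, funnelling through the common reformulation that $A$ admits no eigenvector (over $\Fqc$) lying in $\ker C$.

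The equivalence of (2) with the definition is immediate: the observability matrix, viewed as a linear map $\Fq^k\to\Fq^{(n-k)k}$, has kernel exactly $\bigcap_{i=0}^{k-1}\ker(CA^i)$, and full column rank $k$ is equivalent to this kernel being $\{\bf{0}\}$. By the Cayley--Hamilton theorem this finite intersection also equals $V:=\bigcap_{i\ge 0}\ker(CA^i)$, the largest $A$-invariant subspace of $\Fq^k$ contained in $\ker C$. I would then show that $V=\{\bf{0}\}$ if and only if there is no nonzero $v\in\Fqc^k$ simultaneously satisfying $Av=\lambda v$ (for some $\lambda\in\Fqc$) and $Cv=\bf{0}$. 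The forward direction picks an eigenvalue of $A|_V$ over $\Fqc$; for the converse, the key observation is that $V\otimes_{\Fq}\Fqc$ equals the analogous largest $A$-invariant subspace of $\Fqc^k$ contained in $\ker C$ (by flatness of base change applied to each kernel in the intersection), so it contains the line spanned by any such eigenvector and must therefore be nonzero.

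For the equivalence of (1) with the same eigenvector criterion, I would invoke the Smith normal form of the $n\times k$ polynomial matrix $P(x)={xI_k-A \brack -C}$. The product $f_1(x)\cdots f_k(x)$ of the invariant factors equals the $k$th determinantal divisor $d_k(x)$, namely the monic gcd of all $k\times k$ minors of $P(x)$. Thus $d_k(x)=1$ if and only if no irreducible polynomial $\pi(x)\in\Fq[x]$ divides every $k\times k$ minor, equivalently, $P(\lambda)$ has full column rank $k$ for every $\lambda\in\Fqc$, equivalently, there is no $v\in\Fqc^k\setminus\{\bf{0}\}$ with $(\lambda I_k-A)v=\bf{0}$ and $Cv=\bf{0}$---exactly the criterion from the previous paragraph.

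The step I anticipate as the main obstacle is the Galois descent in paragraph two: passing from an eigenvector of $A$ in $\ker C$ defined only over $\Fqc$ to a nonzero $A$-invariant subspace of $\ker C$ defined already over $\Fq$. Recognising $V\otimes_{\Fq}\Fqc$ as the maximal $A$-invariant subspace of $\ker C$ over $\Fqc$ sidesteps any explicit Galois averaging and renders the descent transparent.
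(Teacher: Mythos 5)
Your argument is correct and complete, and it is worth noting that the paper itself offers no proof of this proposition at all---it is quoted from Gohberg et al.\ and Sontag---so any proof you give is necessarily ``a different route.'' What you have written out is the standard Hautus/PBH-test argument, and each link in the chain checks out: the kernel of the stacked observability matrix is $\bigcap_{i=0}^{k-1}\ker(CA^i)$, which by Cayley--Hamilton equals the largest $A$-invariant subspace $V$ of $\Fq^k$ contained in $\ker C$; your base-change observation that $V\otimes_{\Fq}\Fqc$ is the corresponding maximal invariant subspace over $\Fqc$ (because forming the finite intersection of kernels commutes with the flat extension $\Fq\to\Fqc$) cleanly handles the only delicate point, namely descending from an eigenvector defined over $\Fqc$ to a nonzero invariant subspace defined over $\Fq$; and on the other side, the identification of $f_1\cdots f_k$ with the $k$th determinantal divisor of ${xI_k-A\brack -C}$, together with the fact that the top $k\times k$ minor is the characteristic polynomial of $A$ (so the gcd of the minors is a nonzero polynomial, and is nonconstant exactly when all minors share a root in $\Fqc$), correctly reduces condition (1) to the same eigenvector criterion. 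One could shorten the middle step by proving $V=\{\bf{0}\}$ directly equivalent to rank deficiency of $P(\lambda)$ via the $A$-invariance of $V$, but your version has the advantage of passing through the zero kernel pair definition that the paper actually uses, so it simultaneously justifies the remark following the proposition.
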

It follows from Proposition \ref{zerokernelpair} that $(C,A)$ is a zero kernel pair if and only if $(A^T,C^T)$ is reachable, i.e., it satisfies the hypothesis of Problem \ref{numberofcpairs}. This is the well-known duality between reachable and observable pairs of matrices and establishes the equivalence of Problem \ref{numberofcpairs} with Problems \ref{howmanyunimodular} and \ref{howmanycompletions}. We explain the connection with Problem \ref{howmanyT} in the next section.

\section{Enumeration of simple linear transformations}
\label{enumeration}
Throughout this section, we denote by $V$ an $n$-dimensional vector space over $\Fq$.
\begin{definition}
\label{simpletransformation}
   Let $V$ be a vector space over a field $F$ and $W$ be a subspace of $V$. An $F$-linear transformation $T:W\to V$ is defined to be \emph{simple} if the only $T$-invariant subspace properly contained in $V$ is the zero subspace.
\end{definition}
This definition coincides with the usual definition of simple linear transformation (i.e., one with no nontrivial invariant subspaces) in the case $W=V$. Note that the definition does not allow the domain itself to be an invariant subspace for simple $T$ (unless the domain is $V$ or $\{\bf{0}\}$). 

 To answer Problem \ref{howmanyT}, we need to count the number of simple $T:W\to V$ for a $k$-dimensional subspace $W$. In fact, the answer depends only on the dimension of $W$. To see this, let $W_1,W_2$ be distinct $k$-dimensional subspaces of $V$ and let $S:V\to V$ be a linear isomorphism such that $S(W_1)=W_2$. It is easily seen that $T:W_1\to V$ is simple if and only if $STS^{-1}:W_2\to V$ is simple. We thus have a bijection between simple maps with domain $W_1$ and those with domain $W_2$.

The following proposition relates simple maps to zero kernel pairs.
\begin{proposition}
Let $V$ be an $n$-dimensional $\Fq$-vector space with ordered basis $\mathcal{B}_n=\{u_1,\ldots,u_n\}$. Let $\mathcal{B}_k=\{u_1,\ldots,u_k\}$ denote the ordered basis for the subspace $W$ spanned by $u_1,\ldots,u_k$. Then, a linear transformation $T:W\to V$ is simple if and only if the matrix of $T$ w.r.t. $\mathcal{B}_k$ and $\mathcal{B}_n$ is of the form ${A \brack C}$ for some zero kernel pair $(C,A)$.
\end{proposition}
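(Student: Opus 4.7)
The plan is to transcribe the geometric condition of simplicity into matrix language via the block decomposition. Write $V = W \oplus W'$ where $W'$ is spanned by $u_{k+1},\ldots,u_n$, and identify $W$ with $\Fq^k$ and $V$ with $\Fq^n$ through the bases $\mathcal{B}_k$ and $\mathcal{B}_n$. Under this identification, the matrix ${A \brack C}$ sends a column vector $u \in \Fq^k$ to ${Au \brack Cu} \in \Fq^n$, with $W$-component $Au$ and $W'$-component $Cu$.

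The first step is to characterize the $T$-invariant subspaces of $W$ in terms of $A$ and $C$. For a subspace $U \subseteq W$, the inclusion $T(U) \subseteq U$ forces the $W'$-component of every $Tu$ to vanish, i.e.\ $Cu = 0$ for all $u \in U$, and then the remaining $W$-component $Au$ must lie in $U$; conversely, these two conditions clearly give $T(U) \subseteq U$. Thus $U$ is $T$-invariant precisely when $U \subseteq \ker C$ and $A(U) \subseteq U$. I would then verify that $K := \bigcap_{i \geq 0} \ker(CA^i)$ is the largest such subspace: it is $A$-invariant because $u \in K$ implies $CA^i(Au) = CA^{i+1}u = 0$ for every $i$, and it is contained in $\ker C$ by taking $i=0$; conversely, any $T$-invariant $U$ satisfies $A^i(U) \subseteq U \subseteq \ker C$, so $CA^i(U) = 0$ for all $i$ and hence $U \subseteq K$.

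To finish, I would invoke the Cayley--Hamilton theorem for $A \in M_k(\Fq)$ to express $A^k$ as an $\Fq$-linear combination of $I, A, \ldots, A^{k-1}$; a short induction then yields the stabilization $K = \bigcap_{i=0}^{k-1} \ker(CA^i)$. Consequently, $T$ admits no nonzero invariant subspace inside $W$ if and only if $K = \{0\}$, which is exactly the zero kernel pair condition on $(C,A)$. The only mildly subtle point, though hardly an obstacle, is correctly interpreting $T$-invariance for a map $T : W \to V$ with $U \subseteq W \subsetneq V$: it simultaneously demands $T(U) \subseteq W$ (the kernel condition on $C$) and that $T|_U$ be an endomorphism of $U$ (the ordinary invariance condition for $A$). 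Once these two ingredients are separated, the rest is purely formal.
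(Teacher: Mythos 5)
Your proof is correct and follows essentially the same route as the paper: both translate $T$-invariance of $U\subseteq W$ into the conditions $U\subseteq\ker C$ and $A(U)\subseteq U$ and then iterate to reach $\bigcap_{i=0}^{k-1}\ker(CA^i)$. Your explicit identification of the largest invariant subspace $K$ and the Cayley--Hamilton stabilization argument merely spell out a step the paper leaves implicit when it says the vector with coordinate vector $X$ ``generates a nonzero $T$-invariant subspace.''
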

\begin{proof}
 First suppose $T$ is not simple. Let $v$ be a nonzero vector lying in some $T$-invariant subspace and let the column vector $X_v\in \Fq^k$ denote the coordinates of $v$ w.r.t. $\mathcal{B}_k$. Let the matrix of $T$ w.r.t. the ordered bases $\mathcal{B}_k$ and $\mathcal{B}_n$ be 
\begin{equation}
\label{Tmatrix}
{A \brack C}  
\end{equation}
where $A\in M_k(\Fq)$ and $C\in M_{n-k,k}(\Fq)$. By the hypothesis, $T^iv$ lies in $W$ for all nonnegative integers $i$. The coordinate vector of $Tv$ w.r.t. $\Bn$ is
$$
{A \brack C}X_v={AX_v \brack CX_v}.
$$
Since $Tv\in W$, it follows that $CX_v=0$ and the coordinate vector of $Tv$ w.r.t. $\Bk$ is simply $AX_v$. By considering $T^2v$ it follows similarly that $CAX_v=0$ and the coordinate vector of $T^2v$ w.r.t. $\Bk$ is $A^2X_v$. Continuing this line of reasoning, we find that $CA^iX_v=0$ for $0\leq i \leq k-1$. Thus
$$
X_v\in \bigcap_{i=0}^{k-1}\ker(CA^i).
$$
Since $X_v\neq 0$, it follows that $(C,A)$ is not a zero kernel pair.

For the converse, let the matrix of $T$ w.r.t. $\Bk$ and $\Bn$ be \eqref{Tmatrix} as above and suppose $(C,A)$ is not a zero kernel pair. Then there exists a nonzero $X \in \bigcap_{i=0}^{k-1}\ker(CA^i)$. Then the vector $v\in W$ whose coordinate vector w.r.t. $\Bk$ is $X$ generates a nonzero $T$-invariant subspace. This completes the proof.
\end{proof}

The preceding proposition shows that counting simple maps is equivalent to counting zero kernel pairs. We have thus established the equivalence of all problems in the introduction. 

We now consider the solution of Problem \ref{howmanyT}. The number of $k$-dimensional subspaces of an $n$-dimensional vector space over $\Fq$ is given by the Gaussian binomial coefficient corresponding to $n$ and $k$:
$$
{n \brack k}_q=\frac{(q^n-1)(q^{n-1}-1)\cdots (q^{n-k+1}-1)}{(q^k-1)(q^{k-1}-1)\cdots (q-1)}.
$$

For a fixed $k$-dimensional subspace $W$ of an $n$-dimensional vector space $V$ over $\Fq$, we define
\begin{equation*}
\psi_q(n,k):=\#\{T:W\to V : T \mbox{ is simple}\}.
\end{equation*}
We are thus interested in a formula for $\psi_q(n,k)$. Note that a simple map is necessarily of full rank. For subspaces $W,W'$ of $V$ with $\dim W=\dim W'$, we define
$$
\tau_q(W,W'):=\#\{T:W\to V: T\mbox{ is simple}, TW=W'\}.
$$
Counting simple maps by their image, it is easily seen that
\begin{equation*}
  \psi_q(n,k)=\sum_{U:\dim U=k}\tau_q(W,U),
\end{equation*}
where the sum is taken over all $k$-dimensional subspaces of $U$ of $V$. In fact, $\tau_q(W_1,W_2)$ depends only on $\dim W_1$ and $\dim(W_1\cap W_2)$. Hence, for $l\leq k$, we define 
$$
\tau_q(k,l):=\tau_q(W_1,W_2),
$$
where $W_1,W_2$ are any two $k$-dimensional subspaces with $\dim(W_1\cap W_2)=l$. 
\begin{definition}
Let $V$ be an $n$-dimensional vector space over $\Fq$ and let $W$ be a fixed $k$-dimensional subspace of $V$. For $l\leq k$, define
$$
\sigma_q(n,k,l):=\#\{U: \dim U=k, \dim(U\cap W)=l\}.
$$
\end{definition}
\begin{proposition}
$\tau_q(k,l)$ satisfies the recurrence given by
  \begin{equation}
    \label{eq:taurec}
    \tau_q(k,l)=\sum_{m=0}^{l}\sigma_q(k,l,m)\tau_q(l,m)\prod_{i=l}^{k-1}(q^k-q^i).
  \end{equation}
\end{proposition}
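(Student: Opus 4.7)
The plan is to set up a bijection between simple maps $T:W_1\to V$ with $TW_1=W_2$ (where $\dim W_1=\dim W_2=k$ and $W_0:=W_1\cap W_2$ has dimension $l$) and triples consisting of a suitable auxiliary subspace $U\subseteq W_1$, a smaller simple map $T|_U$, and an extension datum. The key observation is that any simple $T$ is automatically injective on $W_1$, because $\ker T$ is a $T$-invariant subspace of $W_1$ and must therefore vanish. Conditioning on $TW_1=W_2$ then forces $T$ to be a bijection $W_1\to W_2$, so $U:=T^{-1}(W_0)$ is a well-defined $l$-dimensional subspace of $W_1$ with $T(U)=W_0$. The restriction $T|_U:U\to W_1$ is itself simple, since any $T|_U$-invariant subspace of $U$ is a fortiori $T$-invariant in $W_1$ and hence zero.

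I would then stratify over $m:=\dim(U\cap W_0)\in\{0,1,\ldots,l\}$ and count the three ingredients separately. The number of $l$-dimensional subspaces $U\subseteq W_1$ with $\dim(U\cap W_0)=m$ equals $\sigma_q(k,l,m)$ by definition. For each such $U$, the restricted map $T|_U:U\to W_1$ is a simple map between $l$-dimensional subspaces of the ambient $W_1$ whose intersection has dimension $m$; since $\tau_q(l,m)$ is independent of the choice of ambient (as long as the ambient strictly contains the domain), there are exactly $\tau_q(l,m)$ choices for $T|_U$. Finally, given $U$ and $T|_U$, extensions to $T:W_1\to V$ with $TW_1=W_2$ are determined by the images of a basis $e_1,\ldots,e_{k-l}$ of any complement $U'$ of $U$ in $W_1$; the constraint $TW_1=W_2$ combined with $TU=W_0$ forces each $T(e_j)\in W_2$ with the classes $T(e_j)+W_0$ forming a basis of $W_2/W_0$. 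Choosing these images one at a time, with $T(e_j)\notin W_0+\langle T(e_1),\ldots,T(e_{j-1})\rangle$, yields
\[
\prod_{j=1}^{k-l}\bigl(q^k-q^{l+j-1}\bigr)=\prod_{i=l}^{k-1}\bigl(q^k-q^i\bigr)
\]
extensions.

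The final step is to verify that the reverse construction, starting from any valid triple $(U,T|_U,E)$, produces a $T$ that really is simple. If $S\subseteq W_1$ were a nonzero $T$-invariant subspace, then $TS\subseteq S\subseteq W_1$ and $TS\subseteq TW_1=W_2$ jointly give $TS\subseteq W_1\cap W_2=W_0$, so $S\subseteq T^{-1}(W_0)=U$; then $S$ is $T|_U$-invariant and must vanish by simplicity of $T|_U$. Multiplying the three counts and summing over $m$ yields the stated recurrence. I expect the main obstacle to be recognizing the canonical auxiliary subspace $U=T^{-1}(W_0)$ which makes the reduction self-similar in the parameter $l$; once this is spotted, the extension count is a routine linear-algebra calculation and the simplicity verification is a one-line argument.
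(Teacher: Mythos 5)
Your proof is correct and is essentially the paper's argument in mirror image: where the paper conditions on the image $W'=T(W_1\cap W_2)$ inside $W_2$ and recurses on the restriction $T|_{W_1\cap W_2}$, you condition on the preimage $U=T^{-1}(W_1\cap W_2)$ inside $W_1$ and recurse on $T|_U$, with the three factors $\sigma_q(k,l,m)$, $\tau_q(l,m)$ and $\prod_{i=l}^{k-1}(q^k-q^i)$ matching term for term. Your writeup is in fact slightly more careful than the paper's: you make explicit that simplicity forces injectivity (so that $U$ is well defined and the invariant subspaces are trapped inside $U$) and you check both directions of the bijection, points the paper leaves implicit in its opening assertion that every invariant subspace lies in $W_1\cap W_2$.
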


\begin{proof}
  Any invariant subspace of a map $T:W_1\to W_2$ is necessarily contained in $W_1\cap W_2$. Therefore, to construct a simple $T:W_1\to W_2$, we first construct a simple map $T':W_1\cap W_2\to W'$ where $W'$ is some $l$-dimensional subspace of $W_2$. The number of such $W'$ for which $\dim(W_1\cap W_2\cap W')=m$ is $\sigma_q(k,l,m)$. For each $W'$ of this form, $T'$ can be constructed in $\tau_q(l,m)$ ways. $T'$ can be extended to a simple map $T:W_1\to W_2$ in $\prod_{i=l}^{k-1}(q^k-q^i)$ ways.
\end{proof}

\begin{proposition}
  The number of $k$-dimensional subspaces of $V$ that have an $l$-dimensional intersection with a fixed $k$-dimensional subspace of $V$ is given by
\begin{equation}
\label{eq:sigma}
\sigma_q(n,k,l)={k \brack l}_q {n-k \brack k-l}_q q^{(k-l)^2}.
\end{equation}
\end{proposition}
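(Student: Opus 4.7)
The plan is to count the $k$-dimensional subspaces $U$ with $\dim(U\cap W)=l$ by first choosing the intersection and then extending it. Let $L$ denote the intersection $U\cap W$. Since $L$ is an $l$-dimensional subspace of the fixed $k$-dimensional $W$, the number of choices for $L$ is ${k\brack l}_q$. This explains the first factor and reduces the problem to showing that, once $L$ is fixed, the number of $k$-dimensional subspaces $U$ of $V$ with $U\cap W=L$ equals ${n-k\brack k-l}_q\, q^{(k-l)^2}$.

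For the second step I would pass to the quotient $\overline V:=V/L$, which has dimension $n-l$, and observe that $\overline W:=W/L$ has dimension $k-l$. Subspaces $U\supseteq L$ of dimension $k$ in $V$ correspond bijectively to $(k-l)$-dimensional subspaces $\overline U$ of $\overline V$ via $\overline U=U/L$, and the condition $U\cap W=L$ becomes $\overline U\cap\overline W=\{0\}$. So it suffices to count $(k-l)$-dimensional subspaces of an $(n-l)$-dimensional $\Fq$-vector space that meet a fixed $(k-l)$-dimensional subspace trivially.

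I would compute this by an ordered-basis argument. An ordered basis $(v_1,\dots,v_{k-l})$ of such an $\overline U$ is obtained by choosing each $v_j$ outside the span of $\overline W\cup\{v_1,\dots,v_{j-1}\}$, giving
\[
\prod_{j=0}^{k-l-1}\bigl(q^{n-l}-q^{k-l+j}\bigr)
\]
ordered tuples. Dividing by the number $\prod_{i=0}^{k-l-1}(q^{k-l}-q^i)$ of ordered bases of a fixed $(k-l)$-dimensional subspace yields the count of subspaces.

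The remaining work is algebraic simplification. Pulling out the powers of $q$ from each factor in numerator and denominator produces $q^{(k-l)^2+\binom{k-l}{2}}$ in the numerator and $q^{\binom{k-l}{2}}$ in the denominator, leaving a factor $q^{(k-l)^2}$, while the surviving $(q^j-1)$-factors regroup exactly into the Gaussian binomial ${n-k\brack k-l}_q$. Multiplying by the earlier ${k\brack l}_q$ gives \eqref{eq:sigma}. The main (very minor) obstacle is simply the bookkeeping in this last simplification; conceptually the argument is just ``choose the intersection, then choose a complement-like extension in the quotient.''
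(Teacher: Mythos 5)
Your proof is correct and follows essentially the same route as the paper: choose the intersection $L=U\cap W$ in ${k\brack l}_q$ ways, then count ordered tuples extending $L$ while avoiding $W$ and divide by the number of ordered bases overcounting each $U$. Passing to the quotient $V/L$ is only a cosmetic repackaging of the paper's direct count of $\prod_{i=k}^{2k-l-1}(q^n-q^i)$ ordered extensions divided by $\prod_{i=l}^{k-1}(q^k-q^i)$, and your simplification to $q^{(k-l)^2}{n-k\brack k-l}_q$ checks out.
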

\begin{proof}
 Let $W_1$ be $k$-dimensional. We wish to count the number of $k$-dimensional $U$ for which $\dim(U\cap W_1)=l$. There are clearly ${k \brack l}_q$ choices for $U\cap W_1$. An ordered basis for $U\setminus (U\cap W_1)$ can be chosen in $\prod_{i=k}^{2k-l-1}(q^n-q^i)$ ways. Counting this way, the same $U$ arises in $\prod_{i=l}^{k-1}(q^k-q^i)$ ways. Thus the total number of such $U$ is given by
$$
\sigma_q(n,k,l)={k \brack l}_q \frac{\prod_{i=k}^{2k-l-1}(q^n-q^i)}{\prod_{i=l}^{k-1}(q^k-q^i)},
$$
which simplifies to the RHS of \eqref{eq:sigma}.
\end{proof}

The next proposition allows us to compute $\psi_q(n,k)$ from $\tau_q(k,l)$ and $\sigma_q(n,k,l)$.
\begin{proposition}
The number of simple linear transformations whose domain is a fixed $k$-dimensional subspace of $V$ is given by
  \begin{equation}
    \label{eq:psisigmatau}
 \psi_q(n,k)=\sum_{l=0}^{k} \sigma_q(n,k,l)\tau_q(k,l).    
  \end{equation}
\end{proposition}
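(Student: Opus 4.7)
The plan is to partition the set of simple maps $T:W\to V$ according to their image, and then regroup the images by their intersection dimension with $W$. The key preliminary observation, which was already noted in the text preceding the proposition, is that any simple $T:W\to V$ must have full rank $k$ (any vector in $\ker T$ spans a nonzero $T$-invariant subspace contained in $W$), so $TW$ is a well-defined $k$-dimensional subspace of $V$.

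First I would write
\[
\psi_q(n,k) \;=\; \sum_{U:\,\dim U = k} \tau_q(W,U),
\]
which is immediate from the definitions of $\psi_q$ and $\tau_q$ together with the injectivity observation above: every simple $T:W\to V$ contributes to exactly one term on the right, indexed by $U=TW$.

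Next I would use the fact (recorded just above the proposition) that $\tau_q(W,U)$ depends only on $\dim W$ and $\dim(W\cap U)$. Consequently, grouping the $k$-dimensional subspaces $U\subseteq V$ by the value $l:=\dim(W\cap U)$, each group contributes $\sigma_q(n,k,l)\tau_q(k,l)$ to the sum. Since $W\cap U$ is a subspace of $W$ whose dimension can range over $0,1,\ldots,k$, summing over $l$ from $0$ to $k$ yields
\[
\psi_q(n,k) \;=\; \sum_{l=0}^{k} \sigma_q(n,k,l)\,\tau_q(k,l),
\]
which is \eqref{eq:psisigmatau}.

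There is no substantive obstacle: the only point worth guarding against is the silent use of injectivity of simple maps, which ensures that $\dim(TW)=k$ so that we really are summing over $k$-dimensional $U$ (and thus that the $\sigma_q(n,k,l)$ from the preceding proposition is exactly the right counting function). Everything else is a bookkeeping rearrangement of the partition by image.
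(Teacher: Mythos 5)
Your proposal is correct and follows the same route as the paper: partition the simple maps by their image $U=TW$, then group the $k$-dimensional images by $l=\dim(U\cap W)$ and use that $\tau_q(W,U)$ depends only on these dimensions. Your explicit note that simplicity forces injectivity (so $\dim TW=k$) is a worthwhile clarification the paper leaves implicit in the sentence preceding the proposition.
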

\begin{proof}
Counting simple maps with domain $W$ of dimension $k$, we obtain
  \begin{align*}
    \psi_q(n,k)&=\sum_{U:\dim U=k}\tau_q(W,U)\\
               &=\sum_{l=0}^{k}\sum_{U:\dim(U\cap W)=l}\tau_q(W,U)\\
               &=\sum_{l=0}^{k}\sigma_q(n,k,l)\tau_q(k,l). \qedhere
  \end{align*} 
\end{proof} 
Note that, for $k=0$, $\tau_q(k,k)=1$ and for $k>1$, $\tau_q(k,k)=0$. On the other hand, for $k>0$, $\tau_q(k,0)=|\GL_k(\Fq)|$ where $\GL_k(\Fq)$ denotes the general linear group of $k\times k$ nonsingular matrices over $\Fq$. We define
$$\mu_q(k,l):=\frac{\tau_q(k,l)}{\prod_{i=1}^{k-1}(q^k-q^i)}.$$ 
Then the recurrence \eqref{eq:taurec} becomes
\begin{align*}
\mu_q(k,l)&=\frac{\prod_{i=1}^{l-1}(q^l-q^i)}{\prod_{i=1}^{l-1}(q^k-q^i)}\sum_{m=0}^{l}\sigma_q(k,l,m)\mu_q(l,m)  \\
          &=\frac{1}{{k-1 \brack l-1}_q}\sum_{m=0}^{l}\sigma_q(k,l,m)\mu_q(l,m).
\end{align*}

\begin{proposition}
\label{lem:mu}
We have
  \begin{equation}
    \label{eq:mu}
  \mu_q(k,l)=
\begin{dcases}
1 & k=l=0,\\
q^k-q^l & \mbox{otherwise}.
\end{dcases}
  \end{equation}
\end{proposition}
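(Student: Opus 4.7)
The plan is to proceed by induction on $k$, reducing the whole statement to a single combinatorial identity that appears in the generic range $0 < l < k$. The base case $k = 0$ is immediate: the only admissible $l$ is $0$, the denominator in $\mu_q(0,0)$ is an empty product, and $\tau_q(0,0) = 1$, so $\mu_q(0,0) = 1$, matching the exceptional value in \eqref{eq:mu}.

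For the inductive step at $k \geq 1$, I would first dispose of the two boundaries using the observations made just before the proposition. When $l = 0$, $\tau_q(k,0) = |\GL_k(\Fq)| = \prod_{i=0}^{k-1}(q^k - q^i)$, so $\mu_q(k,0) = q^k - 1 = q^k - q^0$. When $l = k$, the image equals the domain, hence the domain is itself a nonzero $T$-invariant subspace contained in the domain, forcing $\tau_q(k,k) = 0$, and so $\mu_q(k,k) = 0 = q^k - q^k$.

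For $0 < l < k$, I would invoke the inductive hypothesis $\mu_q(l,m) = q^l - q^m$ for all $0 \leq m \leq l$ and substitute, together with $\sigma_q(k,l,m) = {l \brack m}_q {k-l \brack l-m}_q q^{(l-m)^2}$, into the recurrence for $\mu_q(k,l)$. This reduces the claim to verifying the identity
\begin{equation*}
{k-1 \brack l-1}_q (q^k - q^l) = \sum_{m=0}^{l} {l \brack m}_q {k-l \brack l-m}_q q^{(l-m)^2} (q^l - q^m).
\end{equation*}
I would split the right-hand side according to $q^l - q^m$. The piece $q^l \sum_m {l \brack m}_q {k-l \brack l-m}_q q^{(l-m)^2}$ collapses to $q^l {k \brack l}_q$ directly by the $q$-Vandermonde identity. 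Setting $j = l-m$ in the remaining piece transforms it to $-q^l \sum_j {l \brack j}_q {k-l \brack j}_q q^{j(j-1)}$, and the $q$-Pascal rule ${l \brack j}_q = q^j {l-1 \brack j}_q + {l-1 \brack j-1}_q$ splits this sum further into two sums, each a $q$-Vandermonde expansion of ${k-1 \brack l-1}_q$ and ${k-1 \brack l}_q$ respectively. Combining, the right-hand side becomes $q^l\bigl({k \brack l}_q - {k-1 \brack l-1}_q - {k-1 \brack l}_q\bigr)$, and one further $q$-Pascal substitution ${k \brack l}_q = q^{k-l} {k-1 \brack l-1}_q + {k-1 \brack l}_q$ reduces this to $q^l(q^{k-l}-1){k-1 \brack l-1}_q = (q^k - q^l){k-1 \brack l-1}_q$, matching the left-hand side.

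The main obstacle is the combinatorial identity in the generic case; the rest of the induction is routine. I expect the manipulations sketched above to translate into a direct, line-by-line calculation using only repeated applications of the $q$-Vandermonde and $q$-Pascal identities.
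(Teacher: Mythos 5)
Your proposal is correct and follows essentially the same route as the paper: induction on $k$ via the recurrence for $\mu_q(k,l)$, with the boundary cases $l=0$ and $l=k$ handled directly and the generic case $0<l<k$ reduced to a Gaussian-binomial identity settled by the $q$-Vandermonde identity. The only difference is cosmetic: the paper first rewrites ${l \brack m}_q(q^l-q^m)$ as $q^m(q^l-1){l-1 \brack m}_q$ and then applies $q$-Vandermonde once, whereas you split the sum and apply $q$-Vandermonde and $q$-Pascal several times; both computations check out and yield $(q^k-q^l){k-1 \brack l-1}_q$.
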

\begin{proof}
The formula for $\mu_q(k,l)$ is easily verified for $k=0,1$ directly from the definition of $\tau_q(k,l)$. We use induction on $k$. Suppose $k>1$. Clearly $\mu_q(k,k)=0$ and $\mu_q(k,0)=q^k-1$, so suppose $0<l<k$. Then
\begin{align*}
{k-1 \brack l-1}_q  \mu_q(k,l)&=\sum_{m=0}^{l}\sigma_q(k,l,m)\mu_q(l,m)\\
            &=\sum_{m=0}^{l}{l \brack m}_q {k-l \brack l-m}_q q^{(l-m)^2}(q^l-q^m)\\
            &=\sum_{m=0}^{l}{k-l \brack l-m}_q q^{(l-m)^2}q^m(q^l-1){l-1 \brack m}_q\\
            &=q^l(q^l-1)\sum_{m=0}^{l}{k-l \brack l-m}_q {l-1 \brack m}_q q^{(l-m)(l-m-1)}\\
            &=q^l(q^l-1){k-1 \brack l}_q,
\end{align*}
where the last equality follows from the $q$-Vandermonde identity \cite[Thm. 3.4]{Andrews}. It follows that $\mu_q(k,l)=q^k-q^l$ as desired.
\end{proof}
We are now ready to prove the main theorem of this paper.
\begin{theorem}
\label{main}
We have
$$
  \psi_q(n,k)=\prod_{i=1}^{k}(q^n-q^i).
$$
\end{theorem}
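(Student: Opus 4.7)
The plan is to combine the decomposition \eqref{eq:psisigmatau} with the explicit formulas \eqref{eq:sigma} for $\sigma_q(n,k,l)$ and $\tau_q(k,l) = (q^k - q^l)\prod_{i=1}^{k-1}(q^k - q^i)$ coming from Proposition \ref{lem:mu} (this closed form remains valid at $l = k$, since the factor $q^k - q^l$ kills the otherwise indeterminate term), and then to collapse the resulting single sum to a Gaussian binomial coefficient via the $q$-Vandermonde identity.

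After substitution, three routine manipulations put the sum into $q$-Vandermonde form: the change of variable $j = k - l$, the factorization $q^k - q^{k-j} = q^{k-j}(q^j - 1)$, and the elementary identity $(q^j - 1){k \brack j}_q = (q^k - 1){k-1 \brack j-1}_q$ (immediate from the defining product). A final reindex $m = j - 1$ rewrites $\psi_q(n,k)$ as the product of the explicit factor $q^k(q^k - 1)\prod_{i=1}^{k-1}(q^k - q^i)$ with
$$\sum_{m=0}^{k-1} q^{m(m+1)} {k-1 \brack m}_q {n-k \brack m+1}_q.$$
The main step is recognizing this as a $q$-Vandermonde evaluation: under the shift $s = m + 1$ it becomes $\sum_{s} q^{s(s-1)} {k-1 \brack k-s}_q {n-k \brack s}_q = {n-1 \brack k}_q$ by \cite[Thm. 3.4]{Andrews}, the very identity already invoked in the proof of Proposition \ref{lem:mu}.

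The remainder is bookkeeping. The prefactor $(q^k - 1)\prod_{i=1}^{k-1}(q^k - q^i)$ is $|\GL_k(\Fq)| = q^{k(k-1)/2}\prod_{i=1}^{k}(q^i - 1)$, so multiplied by the extra $q^k$ and by ${n-1 \brack k}_q = \prod_{i=1}^k(q^{n-i} - 1) / \prod_{i=1}^k(q^i - 1)$, the $q$-factorials cancel and the powers of $q$ get absorbed into the numerator, leaving $\prod_{i=1}^k q^i(q^{n-i} - 1) = \prod_{i=1}^k(q^n - q^i)$, as desired. The principal obstacle I anticipate is spotting the correct index shift into $q$-Vandermonde form; however, the exponent $m(m+1)$ in the reduced sum becomes $s(s-1)$ only under $s = m + 1$, and this single choice forces the three parameters $(k-1,\, n-k,\, k)$ of the identity, after which the computation is essentially mechanical.
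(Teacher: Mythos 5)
Your proof is correct, and I checked the details: after the substitution $j=k-l$, the identity $(q^j-1){k\brack j}_q=(q^k-1){k-1\brack j-1}_q$, and the shift $s=m+1$, the sum is indeed $\sum_s q^{s(s-1)}{n-k\brack s}_q{k-1\brack k-s}_q={n-1\brack k}_q$ by the same form of $q$-Vandermonde used in Proposition \ref{lem:mu}, and the prefactor $q^k(q^k-1)\prod_{i=1}^{k-1}(q^k-q^i)=q^k\,|\GL_k(\Fq)|$ combines with ${n-1\brack k}_q$ to give $\prod_{i=1}^{k}(q^n-q^i)$. The paper finishes differently and more briefly: it observes that the sum in \eqref{eq:psisigmatau} is precisely the sum occurring in the recurrence \eqref{eq:taurec} specialized to $(k,l)=(n,k)$, so that $\tau_q(n,k)=\psi_q(n,k)\prod_{i=k}^{n-1}(q^n-q^i)$, and then substitutes the closed form $\tau_q(n,k)=(q^n-q^k)\prod_{i=1}^{n-1}(q^n-q^i)$ already supplied by Proposition \ref{lem:mu}. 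The two arguments have essentially the same mathematical content, since the $q$-Vandermonde evaluation you perform is exactly the computation inside the proof of Proposition \ref{lem:mu} under the relabelling $(k,l)\mapsto(n,k)$; what the paper's route buys is that this computation need not be repeated, the self-similarity of the recurrence doing the work in two lines, whereas your route is more explicit and does not require noticing that structural coincidence. Your handling of the boundary term is fine: the closed form $\tau_q(k,l)=(q^k-q^l)\prod_{i=1}^{k-1}(q^k-q^i)$ vanishes at $l=k$ for $k\geq 1$, matching the fact that the $j=0$ term drops out of your sum.
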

\begin{proof}
From \eqref{eq:taurec} and \eqref{eq:psisigmatau}, it is easily seen that
\begin{equation*}
\tau_q(n,k)=\psi_q(n,k)\prod_{i=k}^{n-1}(q^n-q^i).
\end{equation*}
Substituting $\tau_q(n,k)=(q^n-q^k)\prod_{i=1}^{n-1}(q^n-q^i)$, the theorem follows.
\end{proof}
\begin{corollary}
  The number of zero kernel pairs $(C,A) \in M_{n-k,k}(\Fq) \times M_k(\Fq)$ is given by $\prod_{i=1}^{k}(q^n-q^i).$
\end{corollary}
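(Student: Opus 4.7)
The plan is straightforward: invoke the matrix-representation bijection between linear maps $T:W\to V$ and pairs $(A,C)\in M_k(\Fq)\times M_{n-k,k}(\Fq)$, observe (via the proposition in Section~\ref{enumeration}) that simple maps correspond exactly to zero kernel pairs, and then apply Theorem~\ref{main}.

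Concretely, I would fix an $n$-dimensional $\Fq$-vector space $V$ with ordered basis $\Bn=\{u_1,\ldots,u_n\}$, and let $W$ be the $k$-dimensional subspace spanned by $\Bk=\{u_1,\ldots,u_k\}$. Expressing an arbitrary $T:W\to V$ as its matrix with respect to $\Bk$ and $\Bn$, and then block-partitioning this matrix as ${A \brack C}$ with $A\in M_k(\Fq)$ and $C\in M_{n-k,k}(\Fq)$, yields a bijection between the set of all linear maps $W\to V$ and the Cartesian product $M_k(\Fq)\times M_{n-k,k}(\Fq)$. By the proposition established earlier in Section~\ref{enumeration}, this bijection restricts to a bijection between simple maps and zero kernel pairs $(C,A)$. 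Hence the number of zero kernel pairs is exactly $\psi_q(n,k)$, and Theorem~\ref{main} gives the stated value $\prod_{i=1}^{k}(q^n-q^i)$.

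There is essentially no obstacle left at this stage: the entire combinatorial difficulty has been absorbed into the proof of Theorem~\ref{main}, which in turn rests on the recurrence for $\mu_q(k,l)$ together with the $q$-Vandermonde identity. Once those ingredients are in hand, the corollary is a one-line deduction from the bijection and is really just a matrix-level reformulation of the main theorem. The only thing worth double-checking is that the bijection between $M_{n,k}(\Fq)$ and $M_k(\Fq)\times M_{n-k,k}(\Fq)$ preserves cardinalities cleanly (it obviously does, since it is simply the block decomposition), so that $\psi_q(n,k)$ may be legitimately identified with the count of zero kernel pairs without an extra combinatorial factor.
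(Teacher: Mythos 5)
Your proposal is correct and follows exactly the route the paper intends: the proposition in Section~\ref{enumeration} identifies simple maps $T:W\to V$ with zero kernel pairs $(C,A)$ via the block decomposition of the matrix of $T$, so the count is $\psi_q(n,k)$ and Theorem~\ref{main} finishes the job. This is the same (implicit) one-line deduction the paper uses.
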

\begin{corollary}
  The number of reachable pairs $(A,B) \in M_{k,k}(\Fq) \times M_{k,n-k}(\Fq)$ is given by $\prod_{i=1}^{k}(q^n-q^i).$
\end{corollary}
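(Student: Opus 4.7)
The plan is to derive this corollary directly from the preceding corollary (on zero kernel pairs) via the observable/reachable duality already noted in Section~\ref{background}. Concretely, consider the map
$$\Phi: M_{n-k,k}(\Fq) \times M_k(\Fq) \longrightarrow M_k(\Fq) \times M_{k,n-k}(\Fq), \qquad (C,A) \longmapsto (A^T, C^T).$$
This is a set-theoretic bijection, being the product of two transposition maps, each of which is itself a bijection between matrix spaces of matching dimensions.

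The key step is to verify that $\Phi$ restricts to a bijection between zero kernel pairs and reachable pairs. By Proposition~\ref{zerokernelpair}, $(C,A)$ is a zero kernel pair precisely when the $k(n-k)\times k$ block matrix with block rows $C, CA, \ldots, CA^{k-1}$ has rank $k$. Since a matrix and its transpose have the same rank, this is equivalent to the $k \times k(n-k)$ matrix
$$\begin{bmatrix} C^T & A^T C^T & \cdots & (A^T)^{k-1} C^T \end{bmatrix}$$
having rank $k$, which is exactly condition~\eqref{controllabilitymatrix} applied to the pair $(A^T, C^T)$. Hence $(C,A)$ is a zero kernel pair if and only if $\Phi(C,A)$ is a reachable pair, so $\Phi$ induces a bijection between the two subsets.

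Combining this bijection with the immediately preceding corollary, which gives $\prod_{i=1}^{k}(q^n-q^i)$ as the count of zero kernel pairs in $M_{n-k,k}(\Fq) \times M_k(\Fq)$, yields the asserted formula for reachable pairs. There is essentially no obstacle here: the entire content is the standard duality between observable and reachable pairs, and the only thing to check is that transposition on a single stacked matrix turns the observability block matrix into the reachability block matrix, which is immediate from $(CA^i)^T = (A^T)^i C^T$.
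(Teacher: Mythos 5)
Your proof is correct and follows exactly the route the paper intends: the corollary is deduced from the zero kernel pair count via the transposition bijection $(C,A)\mapsto(A^T,C^T)$ and the observability/reachability duality already recorded after Proposition~\ref{zerokernelpair}. The only difference is that you spell out the verification $(CA^i)^T=(A^T)^iC^T$ explicitly, which the paper leaves implicit.
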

Theorem \ref{main} gives us the following answer to Problem \ref{howmanyunimodular}.
\begin{corollary}  
\label{probofunimod}
  The number of matrices $Y\in M_{n,k}(\Fq)$ for which 
  \begin{equation}
    \label{eq:unimod}
x{I_k \brack \bf{0}}-Y    
  \end{equation}
is unimodular is given by $\prod_{i=1}^{k}(q^n-q^i).$
\end{corollary}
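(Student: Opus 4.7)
The plan is to derive this corollary directly from the equivalences assembled in Section \ref{background} together with the preceding count of zero kernel pairs. First, I would partition an arbitrary $Y \in M_{n,k}(\Fq)$ into blocks as $Y = {A \brack C}$, with $A \in M_k(\Fq)$ and $C \in M_{n-k,k}(\Fq)$. Under this identification the matrix polynomial in \eqref{eq:unimod} becomes
$$
x{I_k \brack \bf{0}} - Y = {xI_k - A \brack -C},
$$
which is exactly the polynomial matrix appearing in \eqref{xI-kcols}.

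Next, I would observe that \eqref{eq:unimod} is unimodular precisely when all of its invariant factors equal $1$, which is equivalent to the product of those invariant factors being $1$. By Proposition \ref{zerokernelpair}, this condition holds if and only if $(C, A)$ is a zero kernel pair. Since the block decomposition $Y \leftrightarrow (C,A)$ is a bijection from $M_{n,k}(\Fq)$ onto $M_{n-k,k}(\Fq) \times M_k(\Fq)$, the number of unimodular $Y$ coincides with the number of zero kernel pairs.

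Finally, I would invoke the corollary stated immediately after Theorem \ref{main}, which asserts that the number of zero kernel pairs $(C,A) \in M_{n-k,k}(\Fq) \times M_k(\Fq)$ equals $\prod_{i=1}^{k}(q^n-q^i)$, giving the desired formula. There is no genuine obstacle in this argument: all of the substantive work has already been carried out in Theorem \ref{main} and in the equivalences established in Section \ref{background}, and this corollary merely records the translation of that count into the language of unimodular matrix polynomials used in Problem \ref{howmanyunimodular}.
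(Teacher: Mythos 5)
Your argument is correct and is precisely the route the paper takes: the block decomposition $Y = {A \brack C}$ identifies \eqref{eq:unimod} with \eqref{xI-kcols}, Proposition \ref{zerokernelpair} translates unimodularity into the zero kernel pair condition, and the count follows from Theorem \ref{main}. The paper leaves this translation implicit, stating the corollary without proof, so your write-up simply makes explicit the same chain of equivalences.
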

\section{Conclusion}
 Corollary \ref{probofunimod} shows that for a uniformly random $Y\in M_{n,k}(\Fq)$, the probability that \eqref{eq:unimod} is unimodular is given by $\delta_q(n,k)=\prod_{i=1}^{k}(1-q^{i-n})$. Guo and Yang \cite[Thm. 1]{GuoYang2013} have recently proved that the probability that a uniformly random $n\times k$ matrix over $\Fq[x]$ is unimodular is also given by $\delta_q(n,k)$.  Corollary \ref{probofunimod} shows that the probability of being unimodular is unaltered even if we consider a very specific subset of $M_{n,k}(\Fq[x])$. It would thus be interesting to find and characterize other subsets of rectangular polynomial matrices for which this property remains true. For a positive integer $m$ and $k<n$, we define  
$$
M_{n,k}(\Fq[x];m):=\left\{x^m{I_k \brack \bf{0}}+\sum_{i=0}^{m-1}x^iA_i: A_i\in M_{n,k}(\Fq) \mbox{ for } 0\leq i \leq m-1\right\}.
$$
 We have the following conjecture on the density of unimodular matrices.
\begin{conjecture}
\label{genunimod}
  The probability that a uniformly random element of $M_{n,k}(\Fq[x];m)$ is unimodular is given by $\delta_q(n,k)$.
\end{conjecture}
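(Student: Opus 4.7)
The plan is to reduce the conjecture to a constrained zero-kernel pair count via a companion linearization, then to attempt an induction on $m$ based on a fibre-invariance property.

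First, decompose $A_i={B_i\brack C_i}$ with $B_i\in M_k(\Fq)$ and $C_i\in M_{n-k,k}(\Fq)$, and let $R(x)=x^mI_k+\sum_{i=0}^{m-1}x^iB_i$. A standard column-reduction on the block companion pencil of $R$ exhibits the $(mk+n-k)\times mk$ pencil $\begin{bmatrix} xI_{mk}-A \\ -C \end{bmatrix}$, where $A\in M_{mk}(\Fq)$ is the block companion matrix of $R$ and $C=[-C_0\mid\cdots\mid -C_{m-1}]\in M_{n-k,mk}(\Fq)$, as unimodularly equivalent to $\diag(I_{(m-1)k},P(x))$. Consequently $P$ is unimodular iff $(C,A)$ is a zero kernel pair, and Conjecture~\ref{genunimod} becomes the assertion that the number of tuples $(B_0,\dots,B_{m-1},C_0,\dots,C_{m-1})$ yielding a zero kernel pair equals $q^{(m-1)nk}\prod_{i=1}^{k}(q^n-q^i)$. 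The case $m=1$ is exactly Corollary~\ref{probofunimod}.

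For $m\ge 2$, I would attempt to prove the stronger claim that for every fixed $(A_1,\dots,A_{m-1})\in M_{n,k}(\Fq)^{m-1}$, the number of $A_0\in M_{n,k}(\Fq)$ making $P$ unimodular is exactly $\prod_{i=1}^{k}(q^n-q^i)$, independent of the chosen $A_1,\dots,A_{m-1}$; summing over the $q^{(m-1)nk}$ such choices then yields the total. This invariance is easy to verify in two cases: when $A_1=\cdots=A_{m-1}=0$, the substitution $y=x^m$ (surjective on $\Fqc$) reduces the count directly to Corollary~\ref{probofunimod}, and when $k=1$ a gcd analysis of the entries of $P$ shows the fibre has constant size $q^n-q$ for every choice of data. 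In the linearized picture, the claim becomes the assertion that once $B_1,\dots,B_{m-1},C_1,\dots,C_{m-1}$ are held fixed, the number of $(B_0,C_0)$ making $(C,A)$ a zero kernel pair does not depend on the remaining parameters.

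The principal obstacle is proving this fibre-invariance for general $k\ge 2$. One natural attack is to conjugate $(C,A)$ by $S\in\GL_{mk}(\Fq)$ into a Brunovsky observability canonical form and count via a partition-indexed stratification; however the block companion constraint on $A$ is not similarity-invariant, so careful orbit bookkeeping is required. A potentially cleaner alternative is module-theoretic: unimodular $P\in M_{n,k}(\Fq[x];m)$ correspond bijectively to rank-$k$ direct summands $M\subset\Fq[x]^n$ whose Kronecker (minimal column-degree) indices all equal $m$ and whose ordered leading-coefficient basis is $(e_1,\dots,e_k)$, so the conjecture becomes the statement that among the $q^{mnk}$ rank-$k$ submodules of this combinatorial type exactly a fraction $\delta_q(n,k)$ are direct summands, a fraction miraculously independent of $m$; one can try to prove this via a cell decomposition of the affine Grassmannian combined with a telescoping $q$-Vandermonde identity in the spirit of Proposition~\ref{lem:mu}. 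A third, more analytic, route would be an Euler-product identity over monic irreducible $g\in\Fq[x]$ in which the (genuine) correlations among the events ``$P\bmod g$ has rank $k$'' conspire to produce exactly the Guo--Yang density $\delta_q(n,k)$ at every finite $m\ge 1$.
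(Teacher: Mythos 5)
This statement is one the paper itself does not prove: it is stated as an open conjecture, verified only computationally for small $m,n,k,q$, so there is no proof of record to compare yours against — and your proposal, as written, does not close the gap either. Your reduction is sound as far as it goes: the companion linearization correctly identifies unimodularity of $x^m{I_k \brack \bf{0}}+\sum_{i<m}x^iA_i$ with the zero-kernel-pair condition for a pair $(C,A)$ in which $A$ is constrained to be a block companion matrix, and your special cases ($m=1$ via Corollary \ref{probofunimod}; $A_1=\cdots=A_{m-1}=0$ via the substitution $y=x^m$, which works because unimodularity is equivalent to full rank at every point of $\Fqc$; $k=1$ via a gcd count) are all correct. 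But the reduction does not let you invoke Theorem \ref{main}: that theorem counts \emph{unconstrained} pairs $(C,A)$, and the block-companion constraint on $A$ cuts out a non-similarity-invariant subset, so none of the paper's machinery (the recurrence for $\tau_q$, the $q$-Vandermonde computation) transfers. The entire difficulty of the conjecture is concentrated in exactly the step you defer.

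More pointedly, the induction hypothesis you propose — that for every fixed $A_1,\dots,A_{m-1}$ the number of admissible $A_0$ is exactly $\prod_{i=1}^{k}(q^n-q^i)$ — is a \emph{strictly stronger} statement than the conjecture: the total count could equal $q^{(m-1)nk}\prod_{i=1}^{k}(q^n-q^i)$ without the fibres being constant. You verify fibre-invariance only in two degenerate regimes ($k=1$, and all higher coefficients zero), neither of which exercises the interaction between blocks that makes $k\ge 2$ hard. Before building a proof on this claim you should at minimum test it computationally for, say, $k=2$, $n=3$, $m=2$, $q=2$ across several choices of $A_1$; if it fails, the whole architecture collapses. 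The three fallback strategies you sketch (Brunovsky stratification, direct summands with prescribed Kronecker indices, an Euler product over irreducibles) are each plausible entry points, but in each case the concluding step — "careful orbit bookkeeping," "a telescoping $q$-Vandermonde identity," "correlations conspire" — is a restatement of the conjecture rather than an argument, since the reductions mod an irreducible $g$ are not independent on the affine subspace $M_{n,k}(\Fq[x];m)$ and the orbit/cell counts are precisely the unknown quantities.
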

We remark that the conjectured probability is independent of $m$ (the common degree of the matrix polynomials). Conjecture \ref{genunimod} is of great interest since 
a solution would provide an alternate resolution of the problems stated in the introduction.
While the conjecture has been verified using computer programs for small values of $m,n,k,q$, the general case still appears to be open.

\section*{Acknowledgements}
The author would like to thank Prof. Arvind Ayyer for some discussions and his careful reading of a preliminary version of this paper. This work is supported partly by a UGC Center for Advanced Study grant and partly by a DST Centre for Mathematical Biology phase II grant.
\bibliographystyle{plain}   
\bibliography{biblio}

\end{document}